\documentclass[a4paper]{amsart}
\usepackage{amscd}
\usepackage{amsthm}
\usepackage{graphicx}
\usepackage{amsmath}
\usepackage{amsfonts}
\usepackage{amssymb}

\newtheorem{theorem}{Theorem}

\newtheorem{corollary}[theorem]{Corollary}

\newtheorem{lemma}[theorem]{Lemma}

\newtheorem{proposition}[theorem]{Proposition}
\newtheorem{remark}[theorem]{Remark}

\begin{document}
\title[A NOTE ON HARDY SPACES AND BOUNDED OPERATORS]{ A NOTE ON HARDY SPACES AND BOUNDED OPERATORS}
\author{Pablo Rocha}
\address{Universidad Nacional del Sur, INMABB (Conicet), Bah\'{\i}a Blanca, 8000 Buenos Aires, Argentina}
\email{pablo.rocha@uns.edu.ar}
\thanks{\textbf{Key
words and phrases}: Hardy Spaces, Atomic decomposition.}
\thanks{\textbf{2.010
Math. Subject Classification}: }
\thanks{}
\maketitle
\begin{abstract}
In this note we show that if $f \in H^{p}(\mathbb{R}^{n}) \cap L^{s}(\mathbb{R}^{n})$, where $0 < p \leq 1 < s < \infty$, then there exists a
$(p, \infty)$-atomic decomposition which converges to $f$ in $L^{s}(\mathbb{R}^{n})$. From this fact, we prove that a bounded operator $T$ on $L^{s}(\mathbb{R}^{n})$ can be extended to a bounded operator from $H^{p}(\mathbb{R}^{n})$ into $L^{p} (\mathbb{R}^{n})$ if
and only if $T$ is bounded uniformly in $L^{p}$ norm on all $(p, \infty)$-atoms. A similar result is also obtained from $H^{p}(\mathbb{R}^{n})$ into $H^{p}(\mathbb{R}^{n})$.
\end{abstract}

\section{Introduction}

M. Bownik in \cite{Bow} gives an example of a linear functional defined on a dense subspace of Hardy space $H^{1}(\mathbb{R}^{n})$, which maps
all atoms into bounded scalars, but it can not be extended to a bounded functional on the whole space $H^{1}(\mathbb{R}^{n})$. That example is in certain sense pathological. In \cite{ricci}, F. Ricci and J. Verdera show that when $0 < p < 1$ no example like the one in \cite{Bow} can be exhibited. They also studied the extension problem for operators defined on the space of finite linear combinations of $(p, \infty)$-atoms, $0 < p < 1$, and taking values in a Banach space $B$, and proved that if the operator is uniformly bounded on $(p, \infty)$-atoms, then it extends to a bounded operator
from $H^{p}(\mathbb{R}^{n})$ into $B$.
\\
The extension property for operators defined on the space of finite linear combinations of $(1, q)$-atoms with $1< q < \infty$, and taking values in Banach spaces was studied by S. Meda, P. Sj\"ogren and M. Vallarino in \cite{meda}. For $0 < p \leq 1$ and $(p,2)$-atoms and operators taking values in quasi-Banach spaces, by D. Yang and Y. Zhou in \cite{yang}. \\
Y. Han and K. Zhao in \cite{han}, using the Calder\'on reproducing formula, give a $(p,2)$-atomic decomposition for members of $H^{p}(\mathbb{R}^{n}) \cap L^{2}(\mathbb{R}^{n})$, where the composition converges also in $L^{2}(\mathbb{R}^{n})$ rather than only in the distributions sense. Then, using this atomic decomposition, they prove that a bounded operator $T$ on $L^{2}(\mathbb{R}^{n})$ can be extended to a bounded operator from $H^{p}(\mathbb{R}^{n})$ into $L^{p} (\mathbb{R}^{n})$ if and only if $T$ is bounded uniformly on all $(p,2)$-atoms.\\
In this note we will show, by a simple argument, that if $f \in H^{p}(\mathbb{R}^{n}) \cap L^{s}(\mathbb{R}^{n})$ with $0 < p \leq 1 < s < \infty$, then the $(p, \infty)$-atomic decomposition given in \cite{stein}, (Theorem 2, p. 107), whose sum converges to $f$ in $H^{p}$ norm also converges to $f$ in $L^{s}$ norm. As a consequence of this, we prove that a bounded operator $T$ on $L^{s}(\mathbb{R}^{n})$ can be extended to a bounded operator from $H^{p}(\mathbb{R}^{n})$ into $L^{p} (\mathbb{R}^{n})$ if and only if $T$ is bounded uniformly in $L^{s}$ norm on all $(p, \infty)$-atoms. A similar result is also obtained from $H^{p}(\mathbb{R}^{n})$ into $H^{p}(\mathbb{R}^{n})$. See Theorem 5 and Corollary 6 and 7 below.

\section{Preliminaries}

For a given cube $Q$ in $\mathbb{R}^{n}$, we denote by $l(Q)$ its length. The following proposition gives the well known Whitney decomposition for an open nonempty proper subset of $\mathbb{R}^{n}$.

\begin{proposition} $($See p. $463$ in \cite{gra}$)$ Let $\mathcal{O}$ be an open nonempty proper subset of $\mathbb{R}^{n}$. Then there exists a family of closed cubes $\{ Q_k \}_{k}$ such that \\
\textbf{a)} $\bigcup_{k} Q_k = \mathcal{O}$ and the $Q_{k}$'s have disjoint interiors. \\
\textbf{b)} $\sqrt{n} \,\, l(Q_k) \leq dist\left( Q_{k}, \mathcal{O}^{c} \right) \leq 4 \sqrt{n} \,\, l(Q_{k}).$ \\
\textbf{c)} If the boundaries ot two cubes $Q_{i}$ and $Q_k$ touch, then
\begin{equation*}
\frac{1}{4} \leq \frac{l(Q_i)}{l(Q_k)} \leq 4.
\end{equation*}
\textbf{d)} For a given $Q_k$ there exist at most $12^{n}$ cubes $Q_{i}$'s that touch it.
\end{proposition}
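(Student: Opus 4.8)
The plan is to build the family directly from the dyadic mesh on $\mathbb{R}^{n}$ and then pass to maximal cubes. Recall that the dyadic cubes are the sets $2^{-k}(m + [0,1]^{n})$ with $k \in \mathbb{Z}$ and $m \in \mathbb{Z}^{n}$; any two of them are either nested or have disjoint interiors, and a cube of side $l$ has diameter $\sqrt{n}\, l$. First I would declare a dyadic cube $Q$ \emph{admissible} if $\sqrt{n}\, l(Q) \le \operatorname{dist}(Q, \mathcal{O}^{c})$. Admissibility forces $Q \cap \mathcal{O}^{c} = \emptyset$, hence $Q \subseteq \mathcal{O}$; moreover, for $x \in \mathcal{O}$ every sufficiently small dyadic cube containing $x$ is admissible (since $\operatorname{dist}(Q,\mathcal{O}^{c}) \ge \operatorname{dist}(x,\mathcal{O}^{c}) - \sqrt{n}\,l(Q)$), while no admissible cube containing $x$ can have side exceeding $\operatorname{dist}(x,\mathcal{O}^{c})/\sqrt{n}$. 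Thus each $x \in \mathcal{O}$ lies in a \emph{maximal} admissible cube, and I let $\{Q_{k}\}_{k}$ be the collection of all maximal admissible dyadic cubes.

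Property \textbf{a)} is then immediate: the $Q_{k}$ cover $\mathcal{O}$ by the previous remark, and two distinct maximal cubes cannot have overlapping interiors because dyadic cubes nest and nesting would violate maximality. The lower bound in \textbf{b)} is admissibility itself. For the upper bound I would use the dyadic parent $\widehat{Q}_{k}$ (side $2\,l(Q_{k})$): maximality means $\widehat{Q}_{k}$ is not admissible, so $\operatorname{dist}(\widehat{Q}_{k},\mathcal{O}^{c}) < \sqrt{n}\, l(\widehat{Q}_{k}) = 2\sqrt{n}\, l(Q_{k})$, and since $Q_{k} \subseteq \widehat{Q}_{k}$ the triangle inequality gives $\operatorname{dist}(Q_{k},\mathcal{O}^{c}) \le \operatorname{dist}(\widehat{Q}_{k},\mathcal{O}^{c}) + \operatorname{diam}(\widehat{Q}_{k}) < 4\sqrt{n}\, l(Q_{k})$.

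For \textbf{c)}, suppose $Q_{i}$ and $Q_{k}$ share a boundary point $z$. Since $z \in Q_{i}$ and $z \in Q_{k}$, combining the two halves of \textbf{b)} with the triangle inequality yields
\begin{equation*}
\sqrt{n}\, l(Q_{i}) \le \operatorname{dist}(Q_{i},\mathcal{O}^{c}) \le \operatorname{dist}(z,\mathcal{O}^{c}) \le \operatorname{dist}(Q_{k},\mathcal{O}^{c}) + \operatorname{diam}(Q_{k}) \le 5\sqrt{n}\, l(Q_{k}),
\end{equation*}
so $l(Q_{i}) \le 5\, l(Q_{k})$; because both side lengths are integer powers of $2$, this forces $l(Q_{i}) \le 4\, l(Q_{k})$, and the reverse inequality follows by symmetry. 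Finally, for \textbf{d)} I would argue by packing: by \textbf{c)} every cube touching $Q_{k}$ has side length at least $l(Q_{k})/4$, hence volume at least $(l(Q_{k})/4)^{n}$, while all such cubes lie in a fixed concentric dilate of $Q_{k}$ and have pairwise disjoint interiors; dividing volumes bounds their number by a dimensional constant.

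I expect the genuinely delicate points to be the two ``rounding'' steps rather than the construction. In \textbf{c)} one must exploit that dyadic side-length ratios are powers of $2$ to upgrade the soft estimate $l(Q_{i}) \le 5\, l(Q_{k})$ to the stated $l(Q_{i})/l(Q_{k}) \le 4$; and in \textbf{d)} the crude volume count must be sharpened—by controlling precisely the concentric dilate that contains all touching cubes and carefully accounting for their disjointness—to bring the dimensional constant down to the asserted $12^{n}$. The covering and the bound \textbf{b)} are otherwise routine consequences of the dyadic structure.
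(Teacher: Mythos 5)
The paper does not prove this proposition at all---it is quoted from Grafakos---so there is no internal proof to compare against; your construction via maximal admissible dyadic cubes is the canonical one underlying that citation, and parts \textbf{a)}, \textbf{b)}, \textbf{c)} are argued correctly and completely. In particular, the first ``delicate point'' you list at the end is already resolved in your own text: the chain $\sqrt{n}\,l(Q_i) \le \operatorname{dist}(Q_i,\mathcal{O}^c) \le \operatorname{dist}(z,\mathcal{O}^c) \le 5\sqrt{n}\,l(Q_k)$ is valid, and the observation that a power of $2$ which is at most $5$ is at most $4$ is exactly the right (and finished) argument for \textbf{c)}.

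The one genuine gap is \textbf{d)}, which you flag but do not close, and the route you propose cannot close it. A cube touching $Q_k$ may have side as large as $4\,l(Q_k)$, so the smallest concentric dilate of $Q_k$ containing all touching cubes has side $9\,l(Q_k)$; dividing its volume by the minimal volume $(l(Q_k)/4)^n$ of a touching cube gives $36^n$, and no refinement of a pure volume comparison reaches $12^n$, since the touching cubes are neither uniformly small nor space-filling in that dilate. The standard fix is combinatorial rather than metric: each touching cube $Q_i$ is dyadic with $l(Q_i)\ge l(Q_k)/4$, hence is a disjoint union of dyadic cubes of side exactly $l(Q_k)/4$, at least one of which contains a point of $\partial Q_i\cap\partial Q_k$ and has interior disjoint from $Q_k$; distinct $Q_i$'s select distinct such subcubes, and the dyadic cubes of side $l(Q_k)/4$ exterior to $Q_k$ that meet $\partial Q_k$ number $6^n-4^n\le 12^n$. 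For what it is worth, the paper uses $12^n$ downstream (Remarks 2 and 3, where $84^n=7^n\cdot 12^n$ appears) only as ``some dimensional constant'' absorbed into $c$, so your weaker bound would not break anything later---but it does not prove the proposition as stated.
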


\begin{remark} Let $\mathcal{F} = \{ Q_k \}$ be a Whitney decomposition of a proper open subset $\mathcal{O}$ of $\mathbb{R}^{n}$. Fix $0 < \epsilon < \frac{1}{4}$ and denote $Q_{k}^{*}$ the cube with the same center as $Q_k$ but with side length $(1+ \epsilon)$ times that $Q_k$. Then $Q_k$ touch $Q_i$ if and only if $Q_{k}^{*} \cap Q_{i}^{*} \neq \emptyset$. Consequently, every point in $\mathcal{O}$ is contained in at most $12^{n}$ cubes $Q_{k}^{*}$. Moreover, $0 < \epsilon < \frac{1}{4}$ can be chosen such that $\bigcup_k Q_{k}^{\ast} = \mathcal{O}$.
\end{remark}

\begin{remark} We consider $\mathcal{O}_1$ and $\mathcal{O}_2$ two open nonempty proper subset of $\mathbb{R}^{n}$ such that $\mathcal{O}_2 \subset \mathcal{O}_1$, and for $j=1,2$ let $\mathcal{F}_j = \{ Q^{j}_{k} \}$ be the Whitney decomposition of $\mathcal{O}_{j}$. Since $\mathcal{O}_2 \subset \mathcal{O}_1$, from Proposition 1-\textbf{b}, we obtain that if $Q_{i}^{2} \cap Q_{k}^{1} \neq \emptyset$ then
\begin{equation*}
l(Q_{i}^{2}) \leq 5 \,\, l(Q_{k}^{1}).
\end{equation*}
From this, it follows that for a given cube $Q_{i}^{2} \in \mathcal{F}_2$ there exist at most $7^{n}$ cubes in $\mathcal{F}_1$ such that $Q_{i}^{2} \cap Q_{k_j}^{1} \neq \emptyset$, for $j=1, ..., 7^{n}$ and $Q_{i}^{2} \subset \bigcup_{j=1}^{7^{n}} Q_{k_j}^{1}$; thus
$Q_{i}^{2 *} \subset \bigcup_{j=1}^{7^{n}} Q_{k_j}^{1 *}$. Finally, from Proposition 1-\textbf{d} and Remark 2, we obtain that there exist at most $84^{n}$ cubes $Q_{k}^{1 *}$'s that intersect to $Q_{i}^{2 *}$
\end{remark}

We conclude this preliminaries with the following

\begin{lemma} Let $ \{ \mathcal{O}_j \}_{j \in \mathbb{Z}}$ be a family of subset of $\mathbb{R}^{n}$ such that $\mathcal{O}_{j+1} \subset \mathcal{O}_j$ and
$\left| \bigcap_{j=1}^{\infty} \mathcal{O}_j \right| = 0$ . Then
\begin{equation*}
\sum_{j= - \infty}^{\infty} 2^{j} \chi_{\mathcal{O}_j}(x) \leq 2 \sum_{j= - \infty}^{\infty} 2^{j} \chi_{\mathcal{O}_j \setminus \mathcal{O}_{j+1}}(x),
\,\,\,\,\, p.p.x \in \mathbb{R}^{n}.
\end{equation*}
\end{lemma}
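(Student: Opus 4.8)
The plan is to reduce the inequality to an elementary pointwise computation, exploiting that the sets are nested and that the full intersection is null. First I would observe that, since $\mathcal{O}_{j+1} \subset \mathcal{O}_j$ for every $j$, the intersection does not shrink when the index range is enlarged, so $\bigcap_{j \in \mathbb{Z}} \mathcal{O}_j = \bigcap_{j=1}^{\infty} \mathcal{O}_j$ has measure zero. Hence it suffices to prove the inequality for every $x \notin \bigcap_{j \in \mathbb{Z}} \mathcal{O}_j$, a set of full measure.

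Fix such an $x$. By nestedness, the index set $\{ j \in \mathbb{Z} : x \in \mathcal{O}_j \}$ is downward closed: if $x \in \mathcal{O}_j$ then $x \in \mathcal{O}_i$ for all $i \leq j$. Since $x$ lies outside the intersection, this set is not all of $\mathbb{Z}$, so it is either empty or of the form $\{ j : j \leq J \}$ for a largest integer $J = J(x)$. If it is empty both sides vanish and there is nothing to prove, so assume $J(x)$ exists.

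For such $x$ I would simply evaluate both sides. The left-hand side is the geometric sum $\sum_{j \leq J} 2^j = 2^{J+1}$, the factor $2$ being exactly $\sum_{k \geq 0} 2^{-k}$. On the right, $x \in \mathcal{O}_j \setminus \mathcal{O}_{j+1}$ forces $j \leq J$ (so that $x \in \mathcal{O}_j$) and $j+1 > J$ (so that $x \notin \mathcal{O}_{j+1}$), i.e. $j = J$; thus the right-hand sum reduces to the single term $2 \cdot 2^{J} = 2^{J+1}$. The two sides therefore coincide, which in particular yields the claimed inequality (indeed with equality almost everywhere).

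Equivalently, and perhaps more transparently, one can record for a.e. $x$ the telescoping identity $\chi_{\mathcal{O}_j} = \sum_{i \geq j} \chi_{\mathcal{O}_i \setminus \mathcal{O}_{i+1}}$ (valid off the null intersection), substitute it into the left-hand side, and interchange the two non-negative summations by Tonelli to obtain $\sum_{i} \chi_{\mathcal{O}_i \setminus \mathcal{O}_{i+1}} \sum_{j \leq i} 2^j = 2 \sum_i 2^i \chi_{\mathcal{O}_i \setminus \mathcal{O}_{i+1}}$. The only point requiring care is the almost-everywhere hypothesis: it is precisely what guarantees that $J(x)$ exists (equivalently, that the left-hand series converges) away from a null set, for on $\bigcap_j \mathcal{O}_j$ the left-hand side would be $+\infty$ while the right-hand side is $0$. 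I do not expect any genuine obstacle beyond bookkeeping the indices and invoking the null-set hypothesis at the right moment.
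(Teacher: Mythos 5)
Your proof is correct, and it takes a more direct route than the paper's. The paper works with finite partial sums: it establishes the algebraic identity
\begin{equation*}
\sum_{j=-N}^{M} 2^{j} \chi_{\mathcal{O}_j} = \tfrac{1}{2} \sum_{j=-N+1}^{M} 2^{j} \chi_{\mathcal{O}_j} + \sum_{j=-N}^{M-1} 2^{j} \chi_{\mathcal{O}_j \setminus \mathcal{O}_{j+1}} + 2^{M} \chi_{\mathcal{O}_M},
\end{equation*}
absorbs the first term on the right into the left-hand side, and then lets $M, N \to \infty$, using the null-intersection hypothesis to kill the boundary term $2^{M}\chi_{\mathcal{O}_M}$ almost everywhere. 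You instead evaluate both sides exactly at a.e.\ point $x$: off the null set $\bigcap_j \mathcal{O}_j$ the index set $\{j : x \in \mathcal{O}_j\}$ is either empty or of the form $\{j \le J(x)\}$, so the left side is the geometric sum $2^{J+1}$ and the right side collapses to the single term $2 \cdot 2^{J}$. Your identification of where the hypothesis enters (existence of $J(x)$, equivalently convergence of the left-hand series) is the same mechanism as the paper's vanishing boundary term, just seen pointwise. What your argument buys is slightly more: it shows the two sides are actually \emph{equal} almost everywhere, so the constant $2$ is exact, whereas the paper's bookkeeping only yields the inequality. Your second, Tonelli-based formulation is also fine and is essentially the paper's telescoping rearranged into a single interchange of sums. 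Either version would serve as a complete proof.
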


\begin{proof} We consider $N, M \in \mathbb{N}$. Now a computation gives
\begin{equation*}
\sum_{j=-N}^{M} 2^{j} \chi_{\mathcal{O}_j} = \frac{1}{2} \sum_{j=-N+1}^{M} 2^{j} \chi_{\mathcal{O}_j} + \sum_{j=-N}^{M-1} 2^{j} \chi_{\mathcal{O}_j \setminus \mathcal{O}_{j+1}} + 2^{M} \chi_{\mathcal{O}_M}
\end{equation*}
\begin{equation*}
\leq \frac{1}{2} \sum_{j=-N}^{M} 2^{j} \chi_{\mathcal{O}_j} + \sum_{j=-N}^{M-1} 2^{j} \chi_{\mathcal{O}_j \setminus \mathcal{O}_{j+1}} + 2^{M} \chi_{\mathcal{O}_M}.
\end{equation*}
Since $\lim_{M \rightarrow \infty} \chi_{\mathcal{O}_M} = \chi_{\bigcap_{j=1}^{\infty} \mathcal{O}_j}$ and
$\left| \bigcap_{j=1}^{\infty} \mathcal{O}_j \right| = 0$ the lemma follows.
\end{proof}

\qquad

\section{Main Result}

We recall the definition of Hardy space and $(p, \infty)$-atoms. \\
Define $\mathcal{F}_{N}=\left\{ \varphi \in S(\mathbb{R}^{n}):\sum\limits_{
\left\vert \mathbf{\beta }\right\vert \leq N}\sup\limits_{x\in \mathbb{R}
^{n}}\left( 1+\left\vert x\right\vert \right) ^{N}\left\vert \partial ^{
\mathbf{\beta }}\varphi (x)\right\vert \leq 1\right\} $. Denote by $\mathcal{M}$ the grand maximal operator given
by
\[
\mathcal{M}f(x)=\sup\limits_{t>0}\sup\limits_{\varphi \in \mathcal{F}
_{N}}\left\vert \left( t^{-n}\varphi (t^{-1}.)\ast f\right) \left( x\right)
\right\vert ,
\]
where $f\in S^{\prime}(\mathbb{R}^{n})$ and $N$ is a large and fix integer. For $0 < p < \infty$, the Hardy space
$H^{p}\left( \mathbb{R}^{n}\right) $ is the set of all $f\in S^{\prime }(\mathbb{R}^{n})$ for
which $\left\Vert \mathcal{M}f\right\Vert_{p}<\infty $. In this case we define $\left\Vert f\right\Vert
_{H^{p}}=\left\Vert \mathcal{M}f\right\Vert _{p}$. \\
For $1 < p < \infty$, it is well known that $H^{p}(\mathbb{R}^{n}) \cong L^{p}(\mathbb{R}^{n})$, $H^{1}(\mathbb{R}^{n}) \subset L^{1}(\mathbb{R}^{n})$ strictly, and for $0 < p < 1$ the spaces $H^{p}(\mathbb{R}^{n})$ and $L^{p}(\mathbb{R}^{n})$ are not comparable. One of the principal interest of $H^{p}(\mathbb{R}^{n})$ theory is that it gives a natural extension of the results for singular integrals, originally developed for $L^{p}$ ($p > 1$), to the range $0 < p \leq 1$. This is achieved to decompose elements in $H^{p}$ as sums of $(p,\infty)$-atoms.

\qquad

For $0 < p \leq 1$, a $(p,\infty)$-atom is a measurable function $a$ supported on a ball $B$ of $\mathbb{R}^{n}$ satisfying \\
$(i)$ $\| a\|_{\infty} \leq |B|^{-\frac{1}{p}}$, \\
$(ii)$ $\int x^{\alpha} a(x) dx = 0$, for all multiindex $\alpha$ with $|\alpha| \leq n(p^{-1}-1)$.

\qquad

Our main result is the following

\begin{theorem} Let $f \in H^{p}(\mathbb{R}^{n}) \cap L^{s}(\mathbb{R}^{n})$, with $0 < p \leq 1 < s < \infty$. Then there is a sequence of $(p, \infty)$-atoms $\{ a_j \}$ and a sequence of scalars $\{ \lambda_j \}$ with $\sum_{j} |\lambda_j |^{p} \leq c \| f \|_{H^{p}}^{p}$ such that $f = \sum_{j} \lambda_j a_j$, where the series converges to $f$ in $L^{s}(\mathbb{R}^{n})$.
\end{theorem}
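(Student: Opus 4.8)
The plan is to take the $(p,\infty)$-atomic decomposition of \cite{stein} (Theorem 2, p.~107) as a black box, retaining only its quantitative features, and to promote the known $H^{p}$ (hence $\mathcal{S}'$) convergence to $L^{s}$ convergence by a single application of the dominated convergence theorem. Writing $\mathcal{O}_j=\{\mathcal{M}f>2^{j}\}$, I would recall from Stein's construction the good functions $g^{j}$ satisfying $\|g^{j}\|_{\infty}\le C2^{j}$ and $g^{j}=f$ off $\mathcal{O}_j$, together with the fact that $g^{j+1}-g^{j}=\sum_{k}\lambda_{k}^{j}a_{k}^{j}$, where the $a_{k}^{j}$ are $(p,\infty)$-atoms supported in dilated Whitney cubes $Q_{j,k}^{\ast}\subset\mathcal{O}_j$ of bounded overlap (Remark 2), each term obeying $\|\lambda_{k}^{j}a_{k}^{j}\|_{\infty}\le C2^{j}$, and with $\sum_{j,k}|\lambda_{k}^{j}|^{p}\le c\|f\|_{H^{p}}^{p}$. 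In any enumeration the family $\{\lambda_{k}^{j}a_{k}^{j}\}_{j,k}$ is the sequence $\{\lambda_j a_j\}$ of the statement, so the atom conditions and the $\ell^{p}$ bound are inherited for free; only the mode of convergence must be upgraded.

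The core of the argument is to dominate the whole (absolutely summed) series pointwise by a fixed $L^{s}$ function. Because $s>1$, the grand maximal function is pointwise controlled by the Hardy--Littlewood maximal function, so $\mathcal{M}f\in L^{s}$ with $\|\mathcal{M}f\|_{s}\le C_{s}\|f\|_{s}<\infty$; this is the only place where the hypothesis $s>1$ (rather than $s\ge 1$) enters. Fixing $x$, the bounded overlap of the cubes at a single level together with the size bound give $\sum_{k}|\lambda_{k}^{j}a_{k}^{j}(x)|\le C_{n}C\,2^{j}\chi_{\mathcal{O}_j}(x)$. Summing in $j$ and invoking Lemma 4 (whose hypothesis $|\bigcap_{j}\mathcal{O}_j|=|\{\mathcal{M}f=\infty\}|=0$ holds since $\mathcal{M}f\in L^{s}$) collapses the overlapping level sets:
\[
\sum_{j,k}|\lambda_{k}^{j}a_{k}^{j}(x)|\le C_{n}C\sum_{j}2^{j}\chi_{\mathcal{O}_j}(x)\le 2C_{n}C\sum_{j}2^{j}\chi_{\mathcal{O}_j\setminus\mathcal{O}_{j+1}}(x)\le 2C_{n}C\,\mathcal{M}f(x),
\]
the last step because the sets $\mathcal{O}_j\setminus\mathcal{O}_{j+1}$ are disjoint and $\mathcal{M}f>2^{j}$ on each of them. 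Hence the double series converges absolutely at almost every $x$, dominated by $2C_{n}C\,\mathcal{M}f\in L^{s}$.

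It then remains to identify the pointwise sum and conclude. Grouping by levels is legitimate by the absolute convergence just established, and the telescoping identity gives the pointwise sum $\lim_{M\to\infty}g^{M}-\lim_{N\to\infty}g^{-N}$. Here $g^{-N}\to 0$ uniformly since $\|g^{-N}\|_{\infty}\le C2^{-N}$, while $g^{M}\to f$ almost everywhere because $g^{M}=f$ off $\mathcal{O}_M$ and almost every point eventually leaves $\mathcal{O}_M$; thus $\sum_{j,k}\lambda_{k}^{j}a_{k}^{j}=f$ a.e. Finally, for any enumeration the tails of the partial sums are bounded by $\sum|\lambda_{k}^{j}a_{k}^{j}|\le 2C_{n}C\,\mathcal{M}f$ and tend to $0$ a.e., so dominated convergence yields convergence in $L^{s}$, necessarily to $f$.

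I expect the main obstacle to be precisely the failure of absolute $L^{s}$ convergence of the individual atoms: the numerical series $\sum_{j,k}\|\lambda_{k}^{j}a_{k}^{j}\|_{s}$ may diverge, since at a fixed level $\sum_{k}|Q_{j,k}^{\ast}|^{1/s}$ need not be finite, and summing atom norms therefore cannot work. The device that circumvents this is to dominate the \emph{summed} series pointwise instead of summing norms, and it is exactly Lemma 4 that makes the overlapping level-set sum $\sum_{j}2^{j}\chi_{\mathcal{O}_j}$ collapse to something comparable to the single $L^{s}$ function $\mathcal{M}f$, after which one honest application of dominated convergence settles everything at once.
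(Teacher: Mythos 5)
Your proposal is correct and follows essentially the same route as the paper: bounded overlap of the dilated Whitney cubes at each level, Lemma 4 to collapse $\sum_j 2^j\chi_{\mathcal{O}_j}$ onto the disjoint sets $\mathcal{O}_j\setminus\mathcal{O}_{j+1}$, pointwise domination of the absolutely summed series by a constant times $\mathcal{M}f\in L^{s}$, and dominated convergence. Your identification of the limit via the telescoping $g^{M}\to f$ a.e.\ is only a slightly more explicit version of the paper's appeal to the distributional convergence in (iii); there is no substantive difference.
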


\begin{proof} Given $f \in H^{p}(\mathbb{R}^{n}) \cap L^{s}(\mathbb{R}^{n})$, let $\mathcal{O}_j = \{ x : \mathcal{M}f(x) > 2^{j} \}$ and let $\mathcal{F}_j = \{ Q_{k}^{j} \}_{k}$ be the Whitney decomposition associated to $\mathcal{O}_j$ such that $\bigcup_{k} Q_{k}^{j \ast} = \mathcal{O}_j$. Following the proof 2.3 in \cite{stein} (p. 107-109), we have a sequence of functions $A_{k}^{j}$ such that

\qquad

(i) $supp(A_{k}^{j}) \subset Q_{k}^{j \ast} \cup \bigcup_{i \, :  \, Q_{i}^{j+1 \ast} \cap Q_{k}^{j \ast} \neq \emptyset} Q_{i}^{j+1 \ast}$ and
$|A_{k}^{j}(x)| \leq c 2^{j}$ for all $k,j \in \mathbb{Z}$.

\qquad

(ii) $\int x^{\alpha} A_{k}^{j}(x) dx =0$ for all $\alpha$ with $|\alpha| \leq n(p^{-1}-1)$ and all $k,j \in \mathbb{Z}$.

\qquad

(iii) The sum $\sum_{j,k} A_{k}^{j}$ converges to $f$ in the sense of distributions. \\
From (i) we obtain
$$\sum_{k} |A_{k}^{j}| \leq c 2^{j} \left( \sum_{k} \chi_{Q_{k}^{j \ast}} + \sum_{k} \chi_{\bigcup_{i \, :  \, Q_{i}^{j+1 \ast} \cap Q_{k}^{j \ast} \neq \emptyset} Q_{i}^{j+1 \ast}} \right)$$
the remark 2 gives
$$\leq c 2^{j} \left( \chi_{\mathcal{O}_j} + \sum_{k} \sum_{i \, :  \, Q_{i}^{j+1 \ast} \cap Q_{k}^{j \ast} \neq \emptyset} \chi_{Q_{i}^{j+1 \ast}} \right)$$
$$= c 2^{j} \left( \chi_{\mathcal{O}_j} + \sum_{i} \sum_{k \, :  \, Q_{i}^{j+1 \ast} \cap Q_{k}^{j \ast} \neq \emptyset} \chi_{Q_{i}^{j+1 \ast}} \right)$$
from remark 3 we have
$$\leq c 2^{j} \left( \chi_{\mathcal{O}_j} + 84^{n} \sum_{i} \chi_{Q_{i}^{j+1 \ast}} \right)$$
once again, from remark 2, we obtain
$$\leq c 2^{j} \left( \chi_{\mathcal{O}_j} + \chi_{\mathcal{O}_{j+1}} \right) \leq c 2^{j} \chi_{\mathcal{O}_j},$$
by Lemma 4 we can conclude that
$$\sum_{j,k} |A_{k}^{j}(x)| \leq c \sum_{j} 2^{j} \chi_{\mathcal{O}_j \setminus \mathcal{O}_{j+1}}(x), \,\,\,\, p.p.x \in \mathbb{R}^{n}.$$
Thus $$\int \left( \sum_{j,k} |A_{k}^{j}(x)| \right)^{s} dx \leq c \sum_j \int_{\mathcal{O}_j \setminus \mathcal{O}_{j+1}} 2^{js} dx \leq c \sum_j \int_{\mathcal{O}_j \setminus \mathcal{O}_{j+1}} (\mathcal{M}f(x))^{s} dx$$
$$\leq c \int_{\mathbb{R}^{n}} (\mathcal{M}f(x))^{s}  < \infty$$
since $f \in L^{s}(\mathbb{R}^{n})$. Now from (iii) we obtain that the sum $\sum_{j,k} A_{k}^{j}$ converges to $f$ in $L^{s}(\mathbb{R}^{n})$. \\
Finally, we set $a_{k}^{j} = \lambda^{-1}_{j,k} A_{k}^{j}$ with $\lambda_{j,k} = c 2^{j} |B_{k}^{j}|^{1/p}$, where $B_{k}^{j}$ is the smallest ball containing $Q_{k}^{j \ast}$ as well as all the $Q_{i}^{j+1 \ast}$ that intersect $Q_{k}^{j \ast}$. Then we have a sequence $\{ a_{j,k} \}$ of $(p, \infty$)-atoms and a sequence of scalars $\{ \lambda_{j,k} \}$ such that the sum $\sum_{j,k} \lambda_{j,k} a_{j,k}$ converges to $f$ in $L^{s}(\mathbb{R}^{n})$ with $\sum_{j,k} |\lambda_{j,k} |^{p} \leq c \| f \|_{H^{p}}^{p}$ (see (38), in \cite{stein} p. 109). The proof of the theorem is therefore concluded.
\end{proof}

\begin{corollary} Let $T$ be a bounded operator on $L^{s}(\mathbb{R}^{n})$ for some $1< s < \infty$. Then $T$ can be extended to a bounded operator from $H^{p}(\mathbb{R}^{n})$ into $L^{p} (\mathbb{R}^{n})$ $(0<p\leq1)$ if and only if $T$ is bounded uniformly in the $L^{p}$ norm on all $(p, \infty)$-atoms.
\end{corollary}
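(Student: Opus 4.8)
The plan is to establish the two implications separately; the substance lies in the sufficiency direction, where Theorem 5 is precisely the tool that makes the argument go through.

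For the necessity direction I would invoke the standard fact that every $(p,\infty)$-atom $a$ has uniformly bounded Hardy norm, $\|a\|_{H^p} \le c_{n,p}$. If $T$ admits a bounded extension $\widetilde{T} : H^p(\mathbb{R}^n) \to L^p(\mathbb{R}^n)$ agreeing with $T$ on $H^p \cap L^s$ (in particular on atoms, since each atom is bounded with compact support and hence lies in $L^s$), then for every atom $a$ we obtain $\|Ta\|_p = \|\widetilde{T}a\|_p \le \|\widetilde{T}\|\,\|a\|_{H^p} \le c_{n,p}\,\|\widetilde{T}\|$, which is exactly the required uniform bound.

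For the sufficiency direction, set $C_0 = \sup_a \|Ta\|_p$ over all $(p,\infty)$-atoms, assumed finite. Given $f \in H^p \cap L^s$, apply Theorem 5 to write $f = \sum_{j,k} \lambda_{j,k} a_{j,k}$ with convergence in $L^s$ and $\sum_{j,k} |\lambda_{j,k}|^p \le c\|f\|_{H^p}^p$. Since $T$ is linear and bounded on $L^s$, applying $T$ to the partial sums yields $\sum_{j,k}\lambda_{j,k} T a_{j,k} \to Tf$ in $L^s$. Separately, using $0 < p \le 1$ and the $p$-subadditivity of $\|\cdot\|_p^p$, every finite partial sum satisfies $\big\| \sum_{(j,k)\in F} \lambda_{j,k} Ta_{j,k}\big\|_p^p \le \sum_{(j,k)\in F}|\lambda_{j,k}|^p \|Ta_{j,k}\|_p^p \le C_0^p \sum_{j,k}|\lambda_{j,k}|^p \le c\,C_0^p\,\|f\|_{H^p}^p$. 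Because $\sum_{j,k}\|\lambda_{j,k}Ta_{j,k}\|_p^p < \infty$ and $L^p$ is complete under the metric $\rho(g,h)=\|g-h\|_p^p$ for $0<p\le1$, the series $\sum_{j,k}\lambda_{j,k}Ta_{j,k}$ converges in $L^p$ to some $g$ with $\|g\|_p^p \le c\,C_0^p\,\|f\|_{H^p}^p$. Convergence in both $L^s$ and $L^p$ forces, via a.e.-convergent subsequences, that $g = Tf$, so $\|Tf\|_p \le c^{1/p}C_0\,\|f\|_{H^p}$ for every $f \in H^p\cap L^s$.

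Finally I would extend $T$ by density: finite linear combinations of atoms are dense in $H^p$ and belong to $L^s$, so $H^p\cap L^s$ is dense in $H^p$; the estimate just proved shows $T$ maps $H^p\cap L^s$ boundedly into the complete space $L^p$, whence it extends uniquely to a bounded operator $\widetilde{T}:H^p \to L^p$ agreeing with $T$ on $H^p\cap L^s$. The main obstacle is the passage from an $L^s$-convergent atomic expansion to an $L^p$ estimate; the resolution is to exploit that Theorem 5 supplies convergence in $L^s$ (so $T$ may be applied term by term) while the coefficient control $\sum|\lambda_{j,k}|^p \le c\|f\|_{H^p}^p$ together with $p$-subadditivity produces an independent $L^p$ bound, the two limits being reconciled through uniqueness of almost-everywhere limits.
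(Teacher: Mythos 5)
Your proposal is correct and follows essentially the same route as the paper: necessity via the uniform $H^p$ bound on atoms, and sufficiency via Theorem 5, the $L^s$-boundedness of $T$ to push the atomic series through $T$, identification of limits by almost-everywhere convergent subsequences, the $p$-subadditivity of $\|\cdot\|_p^p$, and density of $H^p\cap L^s$ in $H^p$. The only (harmless) difference is that you first establish $L^p$-convergence of $\sum\lambda_{j,k}Ta_{j,k}$ via completeness and then match the two limits, whereas the paper deduces the pointwise bound $|Tf(x)|\leq\sum_j|\lambda_j Ta_j(x)|$ directly and integrates.
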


\begin{proof} Since $T$ is a bounded operator on $L^{s}(\mathbb{R}^{n})$, $T$ is well defined on $H^{p}(\mathbb{R}^{n}) \cap L^{s}(\mathbb{R}^{n})$ for $0 < p \leq 1$. If $T : H^{p}(\mathbb{R}^{n}) \cap L^{s}(\mathbb{R}^{n}) \rightarrow L^{p}(\mathbb{R}^{n})$ can be extended to a bounded operator from $H^{p}(\mathbb{R}^{n})$ into $L^{p} (\mathbb{R}^{n})$, then $\| Ta \|_{p} \leq c_p \|a \|_{H^{p}}$ for all $(p, \infty)$-atom $a$. Since there exists a universal constant $C$ such that $\|a \|_{H^{p}} \leq C < \infty$ for all $(p, \infty)$-atom $a$; it follows that $\| Ta \|_{p} \leq C_p $ for all $(p, \infty)$-atom $a$. \\
Reciprocally, let $f \in H^{p}(\mathbb{R}^{n}) \cap L^{s}(\mathbb{R}^{n})$ by Theorem 5 there is a $(p, \infty)$-atom decomposition such that $\sum_{j} \lambda_j a_j = f$ in $L^{s}(\mathbb{R}^{n})$. Since $T$ is bounded on $L^{s}(\mathbb{R}^{n})$ we have that the sum $\sum_j \lambda_j Ta_j$ converges a $Tf$ in $L^{s}(\mathbb{R}^{n})$, thus there exists a subsequence of natural numbers $\{j_N \}_{N \in \mathbb{N}}$ such that $\lim_{N \rightarrow \infty} \sum_{j=-j_N}^{j_N} \lambda_{j} Ta_{j}(x)= Tf(x)$ p.p.$x \in \mathbb{R}^{n}$, this implies
$$|Tf(x)| \leq \sum_j |\lambda_j Ta_j(x)|, \,\,\,\, p.p.x \in \mathbb{R}^{n}.$$
If $\| Ta \|_{p} \leq C_p < \infty $ for all $(p, \infty)$-atom $a$, and since $0< p \leq 1$ we get
$$\|Tf \|_{p}^{p} \leq \sum_j |\lambda_j|^{p} \|Ta_j \|_{p}^{p} \leq C_{p}^{p} \sum_j |\lambda_j|^{p} \leq C_{p}^{p} \|f \|_{H^{p}}^{p}$$
for all $f \in H^{p}(\mathbb{R}^{n}) \cap L^{s}(\mathbb{R}^{n})$. Since $H^{p}(\mathbb{R}^{n}) \cap L^{s}(\mathbb{R}^{n})$ is a dense subspace of $H^{p}(\mathbb{R}^{n})$, the corollary follows.
\end{proof}
Since many operators that appear in the practice are bounded on $L^{s}$ $(1< s < \infty)$, in view of Corollary 6 is sufficient to prove that our operator is bounded uniformly in the $L^{p}$ norm on all $(p, \infty)$-atoms to assure the boundedness $H^{p}-L^{p}$.

\begin{corollary} Let $T$ be a bounded operator on $L^{s}(\mathbb{R}^{n})$ for some $1< s < \infty$. Then $T$ can be extended to a bounded operator on $H^{p}(\mathbb{R}^{n})$ $(0<p\leq1)$ if and only if $T$ is bounded uniformly in the $H^{p}$ norm on all $(p, \infty)$-atoms.
\end{corollary}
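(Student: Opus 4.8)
The plan is to mirror the proof of Corollary 6, replacing the target space $L^{p}$ by $H^{p}$ and replacing the elementary pointwise bound on $Tf$ by a corresponding pointwise bound on the grand maximal function $\mathcal{M}(Tf)$.

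The forward implication is immediate. If $T$ extends to a bounded operator on $H^{p}(\mathbb{R}^{n})$, then for every $(p,\infty)$-atom $a$ one has $\|Ta\|_{H^{p}} \leq \|T\|_{H^{p}\to H^{p}}\,\|a\|_{H^{p}} \leq C\,\|T\|_{H^{p}\to H^{p}}$, where $C$ is the universal constant bounding $\|a\|_{H^{p}}$ over all $(p,\infty)$-atoms; hence $T$ is uniformly bounded in the $H^{p}$ norm on atoms.

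For the converse I would take $f \in H^{p}(\mathbb{R}^{n}) \cap L^{s}(\mathbb{R}^{n})$ and apply Theorem 5 to write $f = \sum_{j}\lambda_{j}a_{j}$, with convergence in $L^{s}(\mathbb{R}^{n})$ and $\sum_{j}|\lambda_{j}|^{p} \leq c\,\|f\|_{H^{p}}^{p}$. Since $T$ is bounded on $L^{s}(\mathbb{R}^{n})$, the partial sums $\sum_{|j|\leq N}\lambda_{j}Ta_{j}$ converge to $Tf$ in $L^{s}(\mathbb{R}^{n})$. The key step is then to establish the pointwise inequality
\[
\mathcal{M}(Tf)(x) \leq \sum_{j}|\lambda_{j}|\,\mathcal{M}(Ta_{j})(x), \qquad \text{a.e. } x \in \mathbb{R}^{n}.
\]
To prove this, fix $t>0$ and $\varphi \in \mathcal{F}_{N}$. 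The kernel $t^{-n}\varphi(t^{-1}\cdot)$ is a Schwartz function, hence lies in $L^{s'}(\mathbb{R}^{n})$, so for each fixed $x$ the map $g \mapsto \bigl(t^{-n}\varphi(t^{-1}\cdot)\ast g\bigr)(x)$ is a continuous linear functional on $L^{s}(\mathbb{R}^{n})$. Applying it to the $L^{s}$-convergent series for $Tf$ gives $\bigl(t^{-n}\varphi(t^{-1}\cdot)\ast Tf\bigr)(x) = \sum_{j}\lambda_{j}\bigl(t^{-n}\varphi(t^{-1}\cdot)\ast Ta_{j}\bigr)(x)$, so by the triangle inequality and the definition of $\mathcal{M}$ one obtains $\bigl|\bigl(t^{-n}\varphi(t^{-1}\cdot)\ast Tf\bigr)(x)\bigr| \leq \sum_{j}|\lambda_{j}|\,\mathcal{M}(Ta_{j})(x)$; taking the supremum over $t>0$ and $\varphi \in \mathcal{F}_{N}$ yields the claimed bound.

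Finally, since $0 < p \leq 1$, the elementary inequality $\bigl(\sum_{j}b_{j}\bigr)^{p} \leq \sum_{j}b_{j}^{p}$ for nonnegative $b_{j}$, together with the hypothesis $\|Ta_{j}\|_{H^{p}} \leq C_{p}$, gives
\[
\|Tf\|_{H^{p}}^{p} = \int_{\mathbb{R}^{n}}\bigl(\mathcal{M}(Tf)\bigr)^{p} \leq \sum_{j}|\lambda_{j}|^{p}\,\|Ta_{j}\|_{H^{p}}^{p} \leq C_{p}^{p}\sum_{j}|\lambda_{j}|^{p} \leq c\,C_{p}^{p}\,\|f\|_{H^{p}}^{p},
\]
valid for all $f \in H^{p}(\mathbb{R}^{n}) \cap L^{s}(\mathbb{R}^{n})$. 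As $H^{p}(\mathbb{R}^{n}) \cap L^{s}(\mathbb{R}^{n})$ is dense in $H^{p}(\mathbb{R}^{n})$, this estimate shows that $T$ extends to a bounded operator on all of $H^{p}(\mathbb{R}^{n})$. I expect the main obstacle to be justifying the interchange of the grand maximal operator with the infinite sum; this is precisely where the $L^{s}$-convergence supplied by Theorem 5 (rather than mere distributional convergence) is essential, since it allows the Schwartz kernel, regarded as an element of $L^{s'}(\mathbb{R}^{n})$, to pass through the series.
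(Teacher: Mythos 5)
Your proposal is correct and follows essentially the same route as the paper: apply Theorem 5 to get an $L^{s}$-convergent atomic decomposition, pass to the pointwise bound $\mathcal{M}(Tf)(x) \leq \sum_{j}|\lambda_{j}|\,\mathcal{M}(Ta_{j})(x)$, and conclude via the $p$-triangle inequality and density. The only difference is that you spell out the justification of that pointwise inequality (testing the $L^{s}$-convergent series against the Schwartz kernel as an $L^{s'}$ functional), which the paper asserts without detail.
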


\begin{proof} The only if is similar to the only if of the corollary 6. By the other hand, once again by Theorem 5, for $f \in H^{p}(\mathbb{R}^{n}) \cap L^{s}(\mathbb{R}^{n})$,  we have a $(p,\infty)$-atomic decomposition such that
 $\sum_j \lambda_j a_j = f$ in $L^{s}$ norm. Since $T$ is bounded on $L^{s}$ we obtain
$$|\mathcal{M}(Tf)(x)| \leq \sum_j |\lambda_j \mathcal{M}(Ta_j)(x)|, \,\,\,\, p.p.x \in \mathbb{R}^{n}.$$
Now, if $\| Ta \|_{H^{p}} \leq C_p < \infty$ for all $(p, \infty)$-atom $a$ with $0 < p \leq 1$, we get
\begin{equation*}
\|Tf \|_{H^{p}}^{p} = \|\mathcal{M}(Tf) \|_{p}^{p} \leq \sum_j |\lambda_j|^{p} \| \mathcal{M}(Ta_j) \|_{p}^{p} \leq C_{p}^{p} \sum_j |\lambda_j|^{p}
\leq C_{p}^{p} \| f \|_{H^{p}}^{p},
\end{equation*}
for all $f \in H^{p}(\mathbb{R}^{n}) \cap L^{s}(\mathbb{R}^{n})$. Since $H^{p}(\mathbb{R}^{n}) \cap L^{s}(\mathbb{R}^{n})$ is a dense subspace of $H^{p}(\mathbb{R}^{n})$, the corollary follows.

\end{proof}

\begin{remark} The statement of Theorem 5 appears in \cite{nakai}, as well as (of implicity way) Lemma 4 (see p. 3692).
\end{remark}


\begin{thebibliography}{9}

\bibitem{Bow} M. Bownik, \textit{Boundedness of operators on Hardy spaces via atomic decompositions,} Proc.
Amer. Math. Soc. 133, 3535–3542, (2005).

\bibitem{gra} L. Grafakos, Classical Fourier Analysis, Second Edition, Springer Science+Business Media,
LLC, (2008).

\bibitem{han} Y. Han and K. Zhao, \textit{Boundedness of operators on Hardy spaces,} Taiwanese Journal of Mathematics. Vol. 14, No. 2, 319-327, (2010).

\bibitem{meda} S. Meda, P. Sj\"ogren and M. Vallarino, \textit{On the $H^{1}-L^{1}$ boundedness of operators,} Proc.
Amer. Math. Soc. 136, 2921-2931, (2008).

\bibitem{nakai} E. Nakai and Y. Sawano, \textit{Hardy spaces with variable exponents and generalized
Campanato spaces}, Journal of Functional Analysis, 262, 3665-3748, (2012).

\bibitem{ricci} F. Ricci and J. Verdera, \textit{Duality in spaces of finite linear
combinations of atoms}, http://arxiv.org/abs/0809.1719.

\bibitem{stein} E. Stein, Harmonic Analysis: Real-Variable Methods, Orthogonality, and Oscillatory Integrals, Princeton University Press, (1993).

\bibitem{yang} D. Yang, Y. Zhou, \textit{A boundedness criterion via atoms for linear operators in Hardy spaces,} Constr. Approx. 29, 207-218, (2009).


\end{thebibliography}
\end{document}